\newtheorem{prpstn}[]{Proposition}
\newtheorem{rmrk}[]{Remark}
\newcommand{\beq}{\begin{equation}}
\newcommand{\eeq}{\end{equation}}
\newcommand{\be}{\begin{equation}}
\newcommand{\ee}{\end{equation}}
\newcommand{\vep}{\varepsilon}
\DeclareMathOperator*{\argmax}{arg\,max}
\DeclareMathOperator*{\supp}{supp}
\newcommand {\Dt} {\Delta {t}}
\newcommand {\Dx} {\Delta {x}}
\newcommand {\Dy} {\Delta {y}}
\newcommand {\p}   {\partial}
\newcommand{\llb}{\llbracket}
\newcommand{\rrb}{\rrbracket}
\newcommand{\bw}{\boldsymbol{w}}
\newcommand{\bn}{\boldsymbol{n}}
\newcommand{\bM}{\boldsymbol{M}}
\newcommand{\brho}{\boldsymbol{\rho}}
\newcommand {\e}  {\varepsilon}
\numberwithin{equation}{section}
\numberwithin{prpstn}{section}
\numberwithin{ass}{section}
\numberwithin{rmrk}{section}
\title{Invasion fronts and adaptive dynamics in a model for the growth of cell populations with heterogeneous mobility}
\author{Tommaso Lorenzi\thanks{Department of Mathematical Sciences ``G. L. Lagrange'', Dipartimento di Eccellenza 2018-2022, Politecnico di Torino, 10129 Torino, Italy (tommaso.lorenzi@polito.it)}
\and
Beno\^it Perthame\thanks{Sorbonne Universit\'e, CNRS, Universit\'e de Paris, Inria, Laboratoire Jacques-Louis Lions UMR7598, F-75005 Paris, France (Benoit.Perthame@sorbonne-universite.fr)}
\and
Xinran Ruan\thanks{Sorbonne Universit\'e, CNRS, Universit\'e de Paris, Inria, Laboratoire Jacques-Louis Lions UMR7598, F-75005 Paris, France  and Department of Mathematical Sciences ``G. L. Lagrange'', Dipartimento di Eccellenza 2018-2022, Politecnico di Torino, 10129 Torino, Italy (xinran.ruan@polito.it)}
}
\begin{document}
\date{}
\maketitle

\begin{abstract}
We consider a model for the dynamics of growing cell populations with heterogeneous mobility and proliferation rate. The cell phenotypic state is described by a continuous structuring variable and the evolution of the local cell population density function ({\it i.e.} the cell phenotypic distribution at each spatial position) is governed by a non-local advection-reaction-diffusion equation. We report on the results of numerical simulations showing that, in the case where the cell mobility is bounded, compactly supported travelling fronts emerge. More mobile phenotypic variants occupy the front edge, whereas more proliferative phenotypic variants are selected at the back of the front. In order to explain such numerical results, we carry out formal asymptotic analysis of the model equation using a Hamilton-Jacobi approach. In summary, we show that the locally dominant phenotypic trait (\emph{i.e.} the maximum point of the local cell population density function along the phenotypic dimension) satisfies a generalised Burgers' equation with source term, we construct travelling-front solutions of such transport equation and characterise the corresponding minimal speed. Moreover, we show that, when the cell mobility is unbounded, front edge acceleration and formation of stretching fronts may occur. We briefly discuss the implications of our results in the context of glioma growth.
\end{abstract}

% REQUIRED
%\begin{keywords}
%continuum mechanics; thin membranes; effective interface conditions; cell invasion; basement membrane; ovarian cancer
%\end{keywords}

% REQUIRED
%\begin{AMS}
%35Q92; 35R05; 92C10; 92C17
%\end{AMS}

\section{Introduction}
\paragraph{Background}
Mathematical models formulated as reaction-diffusion equations with non-local reaction terms have been increasingly used to achieve a more in-depth theoretical understanding of the mechanisms underlying the spatial spread and the phenotypic evolution of populations with heterogeneous motility~\cite{arnold2012existence,berestycki2015existence,bouin2014travelling,bouin2012invasion,bouin2017super,turanova2015model}. 

In these models, the phenotypic state of each individual is described by a continuous structuring variable, and the model itself consists of a balance equation for the local population density function (\emph{i.e.} the phenotypic distribution of the individuals at each spatial position). As is the case for the classical Fisher-KPP model~\cite{fisher1937wave,kolmogorov1937etude}, individuals are assumed to undergo undirected, random movement, which translates into a linear diffusion term. Additionally, intrapopulation variability of individual motility is taken into account by letting the diffusion coefficient be a function of the structuring variable. Moreover, possible changes in individual motility are conceptualised as transitions between phenotypic states, which are modelled through an integral or a differential operator. Finally, in analogy with the non-local version of the Fisher-KPP model~\cite{berestycki2009non,hamel2014nonlocal}, most of these models rely on the assumption that the population undergoes logistic growth at a rate that depends on the local number density of individuals (\emph{i.e.} the integral of the solution with respect to the structuring variable), which is described via a non-local reaction term.

Among these models, the model for the cane toad invasion presented in~\cite{benichou2012front} has received considerable attention from the mathematical community over the last few years. Analysis of this simple yet effective model has made it possible to find a robust mechanistic explanation for the empirical observation that highly motile individuals are, as such, more likely to be found at the edge of the invasion front, and has helped elucidate the way this form of spatial sorting can promote acceleration of the invasion front~\cite{phillips2006invasion,shine2014review,shine2011evolutionary,urban2008toad}. In particular, the existence of travelling-front solutions and the occurrence of spatial sorting in the case of bounded motility has been studied in~\cite{bouin2014travelling,bouin2012invasion,bouin2016bramson,turanova2015model}, while front acceleration in the case of unbounded motility has been investigated in~\cite{berestycki2015existence,bouin2012invasion,bouin2017super}. Furthermore, an evolution equation for the dynamic of the maximum point of the local population density function along the phenotypic dimension (\emph{i.e.} the dominant phenotypic trait) at the edge of the front has been formally derived in~\cite{bouin2012invasion}. 

\paragraph{Content of the paper} We consider a model for the dynamics of growing cell populations with heterogeneous mobility and proliferation rate. In analogy with the models considered in the aforementioned studies, intra-population heterogeneity is here captured by a continuous structuring variable representing the cell phenotypic state and the model consists of a balance equation for the local cell population density function. However, in contrast to the aforementioned studies, such a balance equation takes the form of a non-local advection-reaction-diffusion equation whereby the 
velocity field and the reaction term are both functions of the structuring variable and of the local cell density. This leads to the emergence of invasion fronts with compact support and brings about richer spatio-temporal dynamics of the dominant phenotypic trait throughout the front. 

\paragraph{Outline of the paper}
The remainder of the paper is organised as follows. In Section~\ref{Sec2}, we describe the model and the main underlying assumptions. In Section~\ref{Sec3}, we present the results of numerical simulations, which were obtained using the numerical methods detailed in Appendix~\ref{AppA}. In Section~\ref{Sec4}, we carry out formal asymptotic analysis of the model in order to provide an explanation for such numerical results. In Section~\ref{Sec5}, we discuss the main results of numerical simulations and formal analysis. Moreover, we briefly explain how these mathematical results may shed light on the interplay between spatial sorting and natural selection that underpins tumour growth and the emergence of phenotypic heterogeneity in glioma. Finally, we provide a brief overview of possible research perspectives.   

\section{Statement of the problem}
\label{Sec2}
\paragraph{A model for the dynamics of heterogeneous growing cell populations} We consider a mathematical model for the dynamics of a growing population of cells structured by a variable $y \in [0,Y] \subset \mathbb{R}_+$, which represents the phenotypic state of each cell and takes into account intra-population heterogeneity in cell proliferation rate and cell mobility (\emph{e.g.} the variable $y$ could represent the level of expression of a gene that controls cell proliferation and cell mobility). The population density at position $x \in \mathbb{R}$ and time $t \in [0,\infty)$ is modelled by the function $n(t,x,y)$, the evolution of which is governed by the following non-local partial differential equation (PDE)  
\beq
 \label{eq:PDEn}
 \begin{cases}
\displaystyle{\partial_t n - \alpha \, \mu(y) \, \partial_x \left(n  \, \partial_x \rho(t,x) \right) = R(y,\rho(t,x)) \, n +  \beta \, \partial^2_{yy} n,}
\\\\
\displaystyle{\rho(t,x) := \int_{0}^Y n(t,x,y) \, {\rm d}y,}
\end{cases}
\quad
(x,y) \in \mathbb{R} \times (0,Y),
\eeq
subject to zero Neumann boundary conditions at $y=0$ and $y=Y$.

The second term on the left-hand side of the non-local PDE~\eqref{eq:PDEn} represents the rate of change of the population density due to the tendency of cells to move toward less crowded regions (\emph{i.e.} to move down the gradient of the cell density $\rho(t,x)$)~\cite{Ambrosi_closure,byrne2009individual}. The function $\alpha \, \mu(y)$, with $\alpha > 0$, models the mobility of cells in the phenotypic state $y$. Without loss of generality, we consider the case where higher values of $y$ correlate with higher cell mobility and, therefore, we let $\mu(y)$ be a smooth function that satisfies the following assumptions
\beq
 \label{ass:mu}
\mu(0) > 0, \quad \dfrac{{\rm d} \mu(y)}{{\rm d} y} > 0 \; \text{ for } y \in (0,Y].
\eeq 

Moreover, the first term on the right-hand side of the non-local PDE~\eqref{eq:PDEn} represents the rate of change of the population density due to cell proliferation and death. The function $R(y,\rho(t,x))$ models the fitness (\emph{i.e.} the net proliferation rate) of cells in the phenotypic state $y$ at time $t$ and position $x$ under the local environmental conditions given by the cell density $\rho(t,x)$. We let $R(y,\rho)$ be a smooth and bounded function that satisfies the following assumptions
\beq
 \label{ass:R}
R(Y,0) = 0, \;\; R(0,\rho_M) = 0, \;\; \partial_\rho R(\cdot,\rho) < 0, \;\; \partial_y R(y,\cdot) < 0 \; \text{ for } y \in (0,Y],
\eeq 
with $0<\rho_M < \infty$ being the local carrying capacity of the cell population. Here, the assumption on $\partial_\rho R$ corresponds to saturating growth, while the assumption on $\partial_y R$ models the fact that more mobile cells may be characterised by a lower proliferation rate due to the energetic cost of migration~\cite{aktipis2013life,alfonso2017biology,gallaher2019impact,gerlee2009evolution,gerlee2012impact,giese2003cost,giese1996dichotomy,hatzikirou2012go,orlando2013tumor,pham2012density}. In particular, we will focus on the case where
\beq
\label{def:R}
R(y,\rho) := r(y) - \rho \quad \text{with} \quad r(Y) = 0,  \quad r(0) = \rho_M, \quad \dfrac{{\rm d} r(y)}{{\rm d} y} < 0 \; \text{ for } y \in (0,Y],
\eeq
with $r(y)$ being a smooth and bounded function that models the proliferation rate of cells in the phenotypic state $y$. 

Finally, the second term on the right-hand side of the non-local PDE~\eqref{eq:PDEn} models the effects of spontaneous, heritable phenotypic changes~\cite{huang2013genetic}, which occur at rate $\beta >0$.

\paragraph{Object of study} Focussing on a biological scenario whereby cell movement occurs on a slower time scale compared to cell proliferation and death, while spontaneous, heritable phenotypic changes occur on a slower time scale compared to cell movement~\cite{doerfler2006dna,smith2004measurement,wang2009mathematical}, we introduce a small parameter $\e >0$ and let 
$$
\alpha := \e \quad \text{and} \quad \beta := \e^2. 
$$
Furthermore, in order to explore the long-time behaviour of the cell population (\emph{i.e.} the behaviour of the population over many cell generations), we use the time scaling $t \to t / \e$ in~\eqref{eq:PDEn}, which gives the following non-local PDE for the population density function $n(\frac{t}{\e},x,y) \equiv n_{\e}(t,x,y)$
\beq
 \label{eq:PDEnen}
 \begin{cases}
\displaystyle{\e \, \partial_t n_{\e} - \e \, \mu(y) \, \partial_x \left(n_{\e}  \, \partial_x \rho_{\e}(t,x) \right) = R(y,\rho_{\e}(t,x)) \, n_{\e} +  \e^2 \, \partial^2_{yy} n_{\e},}
\\\\
\displaystyle{\rho_{\e}(t,x) := \int_{0}^Y  n_{\e}(t,x,y) \, {\rm d}y,}
\end{cases}
\quad
(x,y) \in \mathbb{R} \times (0,Y).
\eeq

\newpage
\section{Numerical simulations}
\label{Sec3}
In this section, we report on numerical solutions of the non-local PDE~\eqref{eq:PDEnen} in the case where $R(y,\rho_{\e})$ is defined via~\eqref{def:R}. We choose the following initial condition
\beq
\label{ass:IC}
n_{\e}(0,x,y) = C \, e^{-x^2} \, e^{-\frac{(y-a)^2}{\vep}} \quad \text{with } \; C \; \text{ s.t. } \; C \, \int_0^Y e^{-\frac{(y-a)^2}{\vep}} \, {\rm d}y = 1 \; \text{ and } \; a \in (0,Y),
\eeq
which satisfies $\displaystyle{n_{\e}(0,x,y)  \xrightharpoonup[\e  \rightarrow 0]{\scriptstyle\ast} \rho(0,x) \, \delta_{\bar{y}^0(x)}(y)}$, with $\rho(0,x) = e^{-x^2}$ and $\bar{y}^0(x) \equiv a$. Such an initial condition models a biological scenario whereby $y=a$ is the locally dominant phenotypic trait at every position $x$ at time $t=0$. We use uniform discretisations of steps $\Delta t$, $\Delta x$ and $\Delta y$ of the intervals $(0,T]$, $(0,X)$ and $(0,Y)$, respectively, as computational domains of the independent variables $t$, $x$ and $y$. The implicit finite volume scheme employed to solve numerically~\eqref{eq:PDEnen} complemented with~\eqref{ass:IC} and subject to zero-flux/Neumann boundary conditions at $x=0$ (we expect a constant step), $y=0$ and $y=Y$ is described in Appendix~\ref{sec:scheme}. All numerical computations are performed in {\sc Matlab}.  

\paragraph{Travelling fronts} The plots in Figure~\ref{Fig1} summarise the numerical results obtained in the case where 
\be
\label{sim1}
Y:=1, \quad \mu(y) := y^2 + 0.01, \quad r(y):= 1 - y^2, \quad \rho_M := 1. 
\ee 
The above definitions of $\mu(y)$ and $r(y)$ are such that assumptions~\eqref{ass:mu} and~\eqref{def:R} are satisfied.
\begin{figure}[htbp]
  \includegraphics[width=\textwidth]{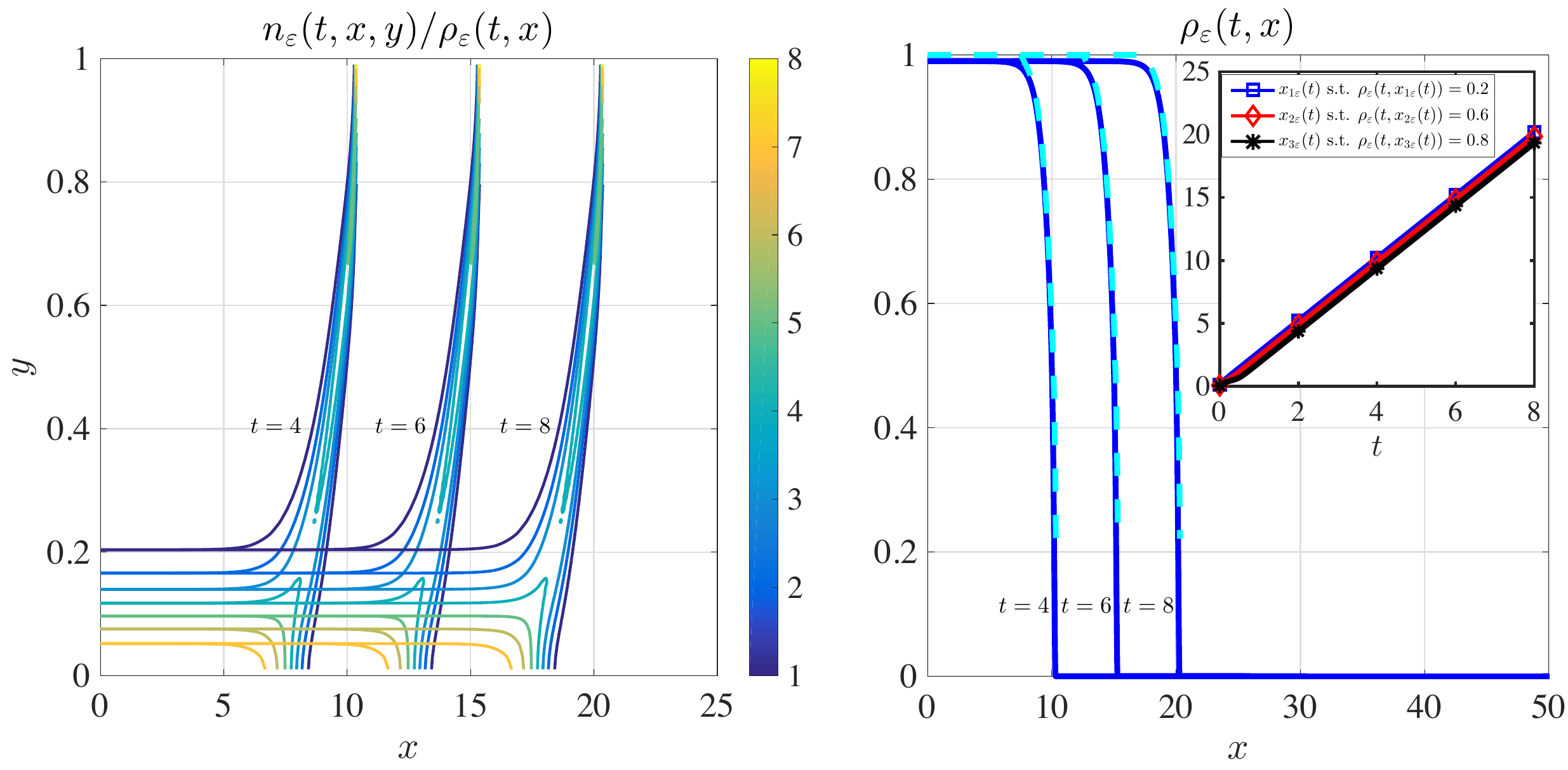}
\caption{{\bf Travelling fronts.} Plots of the normalised cell population density function $n_{\e}(t,x,y)/\rho_{\e}(t,x)$ (left panel) and the cell density $\rho_{\e}(t,x)$ (right panel, solid blue lines) at three successive time instants (\emph{i.e.} $t=4$, $t=6$ and $t=8$). The dashed cyan lines in the right panel highlight the corresponding values of $r(\bar{y}_{{\e}}(t,x))$, with $\bar{y}_{\e}(t,x)$ being the maximum point of $n_{\e}(t,x,y)$ at $x \in \supp(\rho_{\e})$, while the inset of the right panel displays the plots of $x_{1\e}(t)$ (blue squares), $x_{2\e}(t)$ (red diamonds) and $x_{3\e}(t)$ (black stars) such that $\rho_{\e}(t,x_{1\e}(t))=0.2$, $\rho_{\e}(t,x_{2\e}(t))=0.6$ and $\rho_{\e}(t,x_{3\e}(t))=0.8$. These results were obtained solving numerically~\eqref{eq:PDEnen} with $\e := 0.01$ under assumptions~\eqref{def:R},~\eqref{ass:IC} with $a=0.2$, and~\eqref{sim1}. Moreover, $T=8$, $X=25$, $\Dt = 0.01$, $\Dx = 0.01$ and $\Dy = 0.02$.
}
\label{Fig1}
\end{figure}

The left panel of Figure~\ref{Fig1} displays the plots of the normalised cell population density function $n_{\e}(t,x,y)/\rho_{\e}(t,x)$ at three successive time instants (\emph{i.e.} $t=4$, $t=6$ and $t=8$). These plots indicate that for all $x \in \supp(\rho_{\e})$ the normalised population density function $n_{\e}(t,x,y)/\rho_{\e}(t,x)$ is concentrated as a sharp Gaussian with maximum at a point $\bar{y}_{\e}(t,x)$  [\emph{i.e.} $n_{\e}(t,x,y)/\rho_{\e}(t,x) \approx \delta_{\bar{y}_{\e}(t,x)}(y)$ for all $x \in \supp(\rho_{\e})$], and the maximum point $\bar{y}_{\e}(t,x)$ behaves like a compactly supported and monotonically increasing travelling front that connects $y=0$ to $y=Y$. 

The right panel of Figure~\ref{Fig1} displays the plots of the cell density $\rho_{\e}(t,x)$ (solid blue lines) and the function $r(\bar{y}_{{\e}}(t,x))$ (dashed cyan lines) at three successive time instants (\emph{i.e.} $t=4$, $t=6$ and $t=8$). These plots indicate that $\rho_{\e}(t,x)$ behaves like a one-sided compactly supported and monotonically decreasing travelling front that connects $\rho_M$ to $0$. Moreover, there is an excellent quantitative match between $\rho_{\e}(t,x)$ and $r(\bar{y}_{{\e}}(t,x))$, which means that if $\rho_{\e}(t,x)>0$ then the relation $R(\bar{y}_{\e}(t,x),\rho_{\e}(t,x))=0$ holds.

The inset of the right panel of Figure~\ref{Fig1} displays the plots of $x_{1\e}(t)$ (blue squares), $x_{2\e}(t)$ (red diamonds) and $x_{3\e}(t)$ (black stars) such that $\rho_{\e}(t,x_{1\e}(t))=0.2$, $\rho_{\e}(t,x_{2\e}(t))=0.6$ and $\rho_{\e}(t,x_{3\e}(t))=0.8$. These plots show that $x_{1\e}(t)$, $x_{2\e}(t)$ and $x_{3\e}(t)$ are straight lines of slope $\approx 2.5$, which supports the idea that $\rho_{\e}$ behaves like a travelling front of speed $c \approx 2.5$. Such a value of the speed is coherent with the condition on the minimal wave speed $c^*$ given by~\eqref{eq:cminex}. In fact, inserting into~\eqref{eq:cminex} the numerical values of $\bar{y}_{\e}(8,x)$ in place of $\bar{y}(z)$ and the numerical values of $\partial^2_{yy} u_{\e}(8,x,\bar{y}_{\e}(8,x))$ with $u_{\e} = \e \log(n_{\e})$ in place of $\partial^2_{yy} u(z,\bar{y}(z))$ gives $c^* \gtrapprox 2.5$.

\paragraph{Front edge acceleration and stretching fronts} The plots in Figure~\ref{Fig2} summarise the numerical results obtained in the case where 
\be
\label{sim2}
Y:=20, \quad \mu(y) := 0.01 + y^4, \quad r(y):= 1 - \dfrac{y}{1+y}, \quad \rho_M := 1. 
\ee 
The above definitions of $\mu(y)$ and $r(y)$ are chosen so that assumptions~\eqref{ass:mu} and~\eqref{def:R} are satisfied for $Y \to \infty$, and condition~\eqref{eq:cminextoinf} is met (see details below).
\begin{figure}[htbp]
  \includegraphics[width=\textwidth]{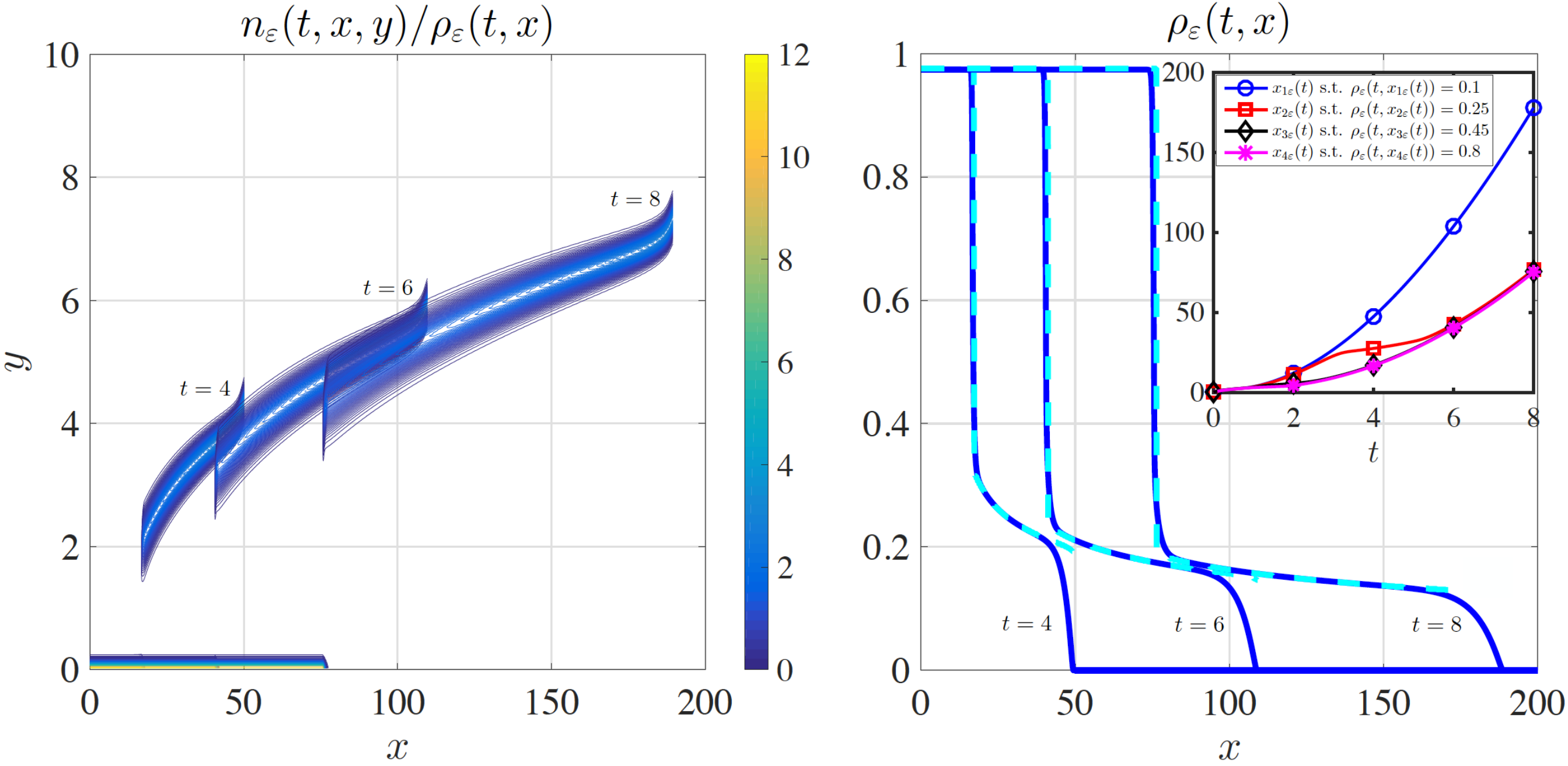}
\caption{{\bf Front edge acceleration and stretching fronts.} Plots of the normalised cell population density function $n_{\e}(t,x,y)/\rho_{\e}(t,x)$ (left panel) and the cell density $\rho_{\e}(t,x)$ (right panel, solid blue lines) at three successive time instants (\emph{i.e.} $t=4$, $t=6$ and $t=8$). The dashed cyan lines in the right panel highlight the corresponding values of $r(\bar{y}_{{\e}}(t,x))$, with $\bar{y}_{\e}(t,x)$ being the maximum point of $n_{\e}(t,x,y)$ at $x \in \supp(\rho_{\e})$, while the inset of the right panel displays the plots of $x_{1\e}(t)$ (blue circles), $x_{2\e}(t)$ (red squares), $x_{3\e}(t)$ (black diamonds) and $x_{4\e}(t)$ (pink stars) such that $\rho_{\e}(t,x_{1\e}(t))=0.1$, $\rho_{\e}(t,x_{2\e}(t))=0.25$, $\rho_{\e}(t,x_{3\e}(t))=0.45$ and $\rho_{\e}(t,x_{4\e}(t))=0.8$. These results were obtained solving numerically~\eqref{eq:PDEnen} with $\e := 0.01$ under assumptions~\eqref{def:R},~\eqref{ass:IC} with $a=0.2$, and~\eqref{sim2}. Moreover, $T=8$, $X=200$, $\Dt = 0.002$, $\Dx = 0.1$ and $\Dy = 0.05$.
}
\label{Fig2}
\end{figure}

The left panel of Figure~\ref{Fig2} displays the plots of the normalised cell population density function $n_{\e}(t,x,y)/\rho_{\e}(t,x)$ at three successive time instants (\emph{i.e.} $t=4$, $t=6$ and $t=8$). Similarly to the case of Figure~\ref{Fig1}, these plots show that for all $x \in \supp(\rho_{\e})$ the normalised population density function $n_{\e}(t,x,y)/\rho_{\e}(t,x)$ is concentrated as a sharp Gaussian with maximum at a point $\bar{y}_{\e}(t,x)$  [\emph{i.e.} $n_{\e}(t,x,y)/\rho_{\e}(t,x) \approx \delta_{\bar{y}_{\e}(t,x)}(y)$ for all $x \in \supp(\rho_{\e})$], and the maximum point $\bar{y}_{\e}(t,x)$ is a monotonically increasing function of $x$ with minimal value $0$ for all $t \in [0,8]$. However, in contrast to the case of Figure~\ref{Fig1}, here $\bar{y}_{\e}(t,x)$ has a jump discontinuity and its maximal value increases as $t$ increases. 

The right panel of Figure~\ref{Fig2} displays the plots of the cell density $\rho_{\e}(t,x)$ (solid blue lines) and the function $r(\bar{y}_{{\e}}(t,x))$ (dashed cyan lines) at three successive time instants (\emph{i.e.} $t=4$, $t=6$ and $t=8$). Similarly to the case of Figure~\ref{Fig1}, these plots indicate that $\rho_{\e}(t,x)$ is a monotonically decreasing function of $x$ with maximal value $\rho_M$ and minimal value $0$ for all $t \in [0,8]$. Furthermore, there is an excellent quantitative match between $\rho_{\e}(t,x)$ and $r(\bar{y}_{{\e}}(t,x))$, which means that if $\rho_{\e}(t,x)>0$ then the relation $R(\bar{y}_{\e}(t,x),\rho_{\e}(t,x))=0$ holds. However, in contrast to the case of Figure~\ref{Fig1}, we have that $\rho_{\e}(t,x)$ behaves like a stretching front, which suggests that the speed of the front edge increases with $t$. 

Coherently with this, the plot of $x_{1\e}(t)$ (blue circles) such that $\rho_{\e}(t,x_{1\e}(t))=0.1$ displayed in the inset of Figure~\ref{Fig2} shows that the value of $x_{1\e}$ undergoes super linear growth, which supports the idea that front edge acceleration occurs. This is also coherent with the fact that, in the case where $\mu(y)$ and $r(y)$ are defined via~\eqref{sim2}, we have that condition~\eqref{eq:cminextoinf} is met and, therefore, the minimal wave speed $c^*$ tends to $\infty$ as $Y \to \infty$.

\section{Formal asymptotic analysis}
\label{Sec4}
In this section, we undertake formal asymptotic analysis of the non-local PDE~\eqref{eq:PDEnen} in order to provide an explanation for the numerical results presented in Section~\ref{Sec3}.

Building on the Hamilton-Jacobi approach presented in~\cite{barles2009concentration,diekmann2005dynamics,lorz2011dirac,perthame2006transport,perthame2008dirac}, we make the real phase WKB ansatz~\cite{barles1989wavefront,evans1989pde,fleming1986pde}
\beq \label{WKB}
n_{\e}(t,x,y) = e^{\frac{u_{\e}(t,x,y)}{\e}},
\eeq
which gives
$$
\partial_t n_{\e} = \frac{\partial_t u_{\e}}{\e} n_{\e}, \quad \partial_x n_{\e} = \frac{\partial_x u_{\e}}{\e} n_{\e}, \quad  \partial^2_{yy} n_{\e} = \left(\frac{1}{\e^2} \left(\partial_y u_{\e} \right)^2 + \frac{1}{\e} \partial^2_{yy} u_{\e} \right) n_{\e}.
$$
Substituting the above expressions into the non-local PDE~\eqref{eq:PDEnen} gives the following Hamilton-Jacobi equation for $u_{\e}(t,x,y)$ 
\beq
\label{eq:PDEue}
\partial_t u_{\e} -  \mu(y) \left(\partial_{x} u_{\e} \, \partial_{x} \rho_{\e} + \e \, \partial^2_{xx} \rho_{\e} \right) = R(y,\rho_{\e}) + \left(\partial_y u_{\e} \right)^2 + \e \, \partial^2_{yy} u_{\e}, \quad (x,y) \in \mathbb{R} \times(0,Y).
\eeq
Letting $\e \to 0$ in~\eqref{eq:PDEue} we formally obtain the following equation for the leading-order term $u(t,x,y)$ of the asymptotic expansion for $u_{\e}(t,x,y)$
\beq
\label{eq:PDEu}
\partial_t u - \mu(y) \,  \partial_{x} \rho \, \partial_{x} u = R(y,\rho) + \left(\partial_y u \right)^2, \quad (x,y) \in \mathbb{R} \times(0,Y),
\eeq
where $\rho(t,x,y)$ is the leading-order term of the asymptotic expansion for $\rho_{\e}(t,x,y)$. 

\paragraph{Constraint on $u$}
Consider $x \in \mathbb{R}$ such that $\rho(t,x) >0$, that is, $x \in \supp(\rho)$, and let $\bar{y}(t,x)$ be a nondegenerate maximum point of $u(t,x,y)$, that is, $\displaystyle{\bar{y}(t,x) \in \argmax_{y \in [0,Y]} u(t,x,y)}$ with $\partial^2_{yy} u(t,x,\bar{y})<0$. Since $R(y,\rho_{\e})$ satisfies assumptions~\eqref{ass:R}, we have that $\rho_{\e}(t,x) < \infty$ for all $\e > 0$. Hence, letting $\e \to 0$ in~\eqref{WKB} formally gives the following constraint for all $t>0$
\beq
\label{eq:ubaryiszero}
u(t,x,\bar{y}(t,x)) = \max_{y \in [0,Y]} u(t,x,y) = 0, \quad x \in \supp(\rho),
\eeq
which also implies that 
\beq
\label{eq:uybaryiszero}
\partial_y u(t,x,\bar{y}(t,x)) = 0 \quad \text{and} \quad \partial_x u(t,x,\bar{y}(t,x)) = 0, \quad x \in \supp(\rho).
\eeq
\begin{rmrk}
The system defined by~\eqref{eq:PDEu} and~\eqref{eq:ubaryiszero} is a constrained Hamilton-Jacobi equation and $\rho(t,x)>0$ can be regarded as a Lagrange multiplier associated with constraint~\eqref{eq:ubaryiszero}. 
\end{rmrk}

\paragraph{Relation between $\bar{y}(t,x)$ and $\rho(t,x)$} Evaluating~\eqref{eq:PDEu} at $y=\bar{y}(t,x)$ and using~\eqref{eq:ubaryiszero} and~\eqref{eq:uybaryiszero} we find 
\beq
\label{eq:Riszero}
R(\bar{y}(t,x),\rho(t,x)) = 0, \quad x \in \supp(\rho).
\eeq
The monotonicity assumptions~\eqref{ass:R} ensure that $\rho \mapsto R(\cdot,\rho)$ and $\bar{y} \mapsto R(\bar{y},\cdot)$ are both invertible. Therefore, relation \eqref{eq:Riszero} gives a one-to-one correspondence between $\bar{y}(t,x)$ and $\rho(t,x)$.

\paragraph{Transport equation for $\bar{y}$}
Differentiating~\eqref{eq:PDEu} with respect to $y$, evaluating the resulting equation at $y=\bar{y}(t,x)$ and using~\eqref{eq:ubaryiszero} and~\eqref{eq:uybaryiszero} yields
\beq
\label{eq:PDEuatbary}
\partial^2_{yt} u(t,x,\bar{y}) - \mu(\bar{y}) \,  \partial_{x} \rho \, \partial^2_{yx} u(t,x,\bar{y})  = \partial_{y} R(\bar{y},\rho), \quad x \in \supp(\rho).
\eeq
Moreover, differentiating~\eqref{eq:uybaryiszero} with respect to $t$ and $x$ we find, respectively,
$$
\partial^2_{ty} u(t,x,\bar{y}) + \partial^2_{yy} u(t,x,\bar{y}) \, \partial_{t} \bar{y}(t,x) = 0 \; \Rightarrow \; \partial^2_{yt} u(t,x,\bar{y}) = - \partial^2_{yy} u(t,x,\bar{y}) \, \partial_{t} \bar{y}(t,x)
$$
and
\beq
\label{eq:u2xy}
\partial^2_{xy} u(t,x,\bar{y}) + \partial^2_{yy} u(t,x,\bar{y}) \, \partial_{x} \bar{y}(t,x) = 0 \; \Rightarrow \; \partial^2_{yx} u(t,x,\bar{y}) = - \partial^2_{yy} u(t,x,\bar{y}) \, \partial_{x} \bar{y}(t,x).
\eeq
Substituting the above expressions of $\partial^2_{yt} u(t,x,\bar{y})$ and $\partial^2_{yx} u(t,x,\bar{y})$ into~\eqref{eq:PDEuatbary} and using the fact that $\partial^2_{yy} u(t,x,\bar{y}) < 0$ gives the following transport equation for $\bar{y}(t,x)$ 
\beq
\label{eq:PDEbary}
\partial_{t} \bar{y} -  \mu(\bar{y}) \, \partial_{x} \rho \, \partial_{x} \bar{y} = \frac{1}{-\partial^2_{yy} u(t,x,\bar{y})} \partial_{y} R(\bar{y},\rho), \quad x \in \supp(\rho),
\eeq
which is a generalised Burgers' equation with source term since $\bar{y}(t,x)$ and $ \rho(t,x)$ are related through \eqref{eq:Riszero}.

\paragraph{Travelling-wave problem} Substituting the travelling-wave ansatz
$$
\rho(t,x) = \rho(z), \quad u(t,x,y) = u(z,y)  \quad \text{and} \quad \bar{y}(t,x) = \bar{y}(z) \quad \text{with} \quad z = x - c \, t, \quad c>0
$$
into~\eqref{eq:PDEu}-\eqref{eq:Riszero} and~\eqref{eq:PDEbary} gives 
\beq
\label{eq:TWu}
- \left(c +  \mu(y) \rho' \right) \partial_z u  = R(y,\rho) + (\partial_y u)^2, \quad (z,y) \in \mathbb{R} \times (0,Y),
\eeq
\beq
\label{eq:ubaryiszeroTW}
u(z,\bar{y}(z)) = \max_{y \in [0,Y]} u(z,y) = 0, \quad \partial_y u(z,\bar{y}(z)) = 0, \quad \partial_z u(z,\bar{y}(z)) = 0, \quad z \in \supp{\left(\rho \right)}, 
\eeq
\beq
\label{eq:TWRiszero}
R(\bar{y}(z),\rho(z)) = 0, \quad z \in \supp{\left(\rho \right)},
\eeq
\beq
\label{eq:TWbary}
- \left(c + \mu(\bar{y}) \rho' \right) \bar{y}'  = \frac{1}{-\partial^2_{yy} u(z,\bar{y})} \partial_{y} R(\bar{y},\rho), \quad z \in \supp{\left(\rho \right)}.
\eeq
We consider travelling-front solutions $\bar{y}(z)$ that satisfy~\eqref{eq:TWbary} subject to the following asymptotic condition
\beq
\label{eq:TWBCy}
\lim_{z \to - \infty}  \bar{y}(z) =0,
%\bar{y}(-\infty) = \argmax_{y \in [0,1]} R(y,\rho(-\infty)).
\eeq
so that, since $R(0,\rho_M)=0$ [{\it cf.} assumptions~\eqref{ass:R}], relation~\eqref{eq:TWRiszero} gives $\displaystyle{\lim_{z \to - \infty} \rho(z) = \rho_M}$.

\paragraph{Monotonicity of travelling-front solutions} 
Differentiating~\eqref{eq:TWRiszero} with respect to $z$ gives 
\beq
\label{eq:TWRziszero}
\partial_y R(\bar{y}(z),\rho(z)) \bar{y}'(z)  + \partial_{\rho} R(\bar{y}(z),\rho(z)) \rho'(z) = 0, \quad z \in \supp{\left(\rho \right)}.
\eeq
Substituting the expression of $\rho'$ given by~\eqref{eq:TWRziszero} into~\eqref{eq:TWbary} yields
$$
-c \ \bar{y}' + \mu(\bar{y}) \ \dfrac{\partial_y R(\bar{y},\rho)}{\partial_{\rho} R(\bar{y},\rho)} \ \left(\bar{y}' \right)^2  = \frac{1}{-\partial^2_{yy} u(z,\bar{y})} \, \partial_{y} R(\bar{y},\rho),
$$
that is,
\beq
\label{eq:TWbary2}
\bar{y}' =  \frac{-\partial_y R(\bar{y},\rho)}{c} \left(\frac{1}{-\partial^2_{yy} u(z,\bar{y})} + \dfrac{\mu(\bar{y}) \left(\bar{y}' \right)^2}{-\partial_{\rho} R(\bar{y},\rho)} \right), \quad z \in \supp{\left(\rho \right)}.
\eeq
Since $\partial^2_{yy} u(z,\bar{y})<0$ and $\partial_y R(y,\cdot)<0$ for $y \in (0,Y]$ [{\it cf.} assumptions~\eqref{ass:R}], using~\eqref{eq:TWbary2} and the expression of $\rho'$ given by~\eqref{eq:TWRziszero} we find
\beq
\label{eq:TWbaryincrhodec}
\bar{y}'(z) > 0 \quad \text{and} \quad \rho'(z) < 0, \quad z \in \supp{\left(\rho \right)}.
\eeq

\paragraph{Position of the front edge} Relation~\eqref{eq:TWRiszero} and monotonicity results~\eqref{eq:TWbaryincrhodec} along with the fact that $R(Y,0)=0$ [{\it cf.} assumptions~\eqref{ass:R}] imply that the position of the edge of a travelling-front solution $\bar{y}(z)$ that satisfies~\eqref{eq:TWbary} subject to asymptotic condition~\eqref{eq:TWBCy} coincides with the unique point $\ell \in \mathbb{R}$ such that $\bar{y}(\ell)=Y$.

\paragraph{Minimal wave speed} Differentiating both sides of~\eqref{eq:TWu} with respect to $y$ gives
$$
- \left(c +  \mu(y) \rho' \right) \partial^2_{yz} u(z,y) -  \dfrac{{\rm d} \mu(y)}{{\rm d} y} \ \rho' \ \partial_{z} u(z,y)  = \partial_y R(y,\rho) + 2 \ \partial_y u(z,y) \ \partial^2_{yy} u(z,y).
$$
Evaluating the above equation at $y=\bar{y}(z)$ using~\eqref{eq:ubaryiszeroTW} yields
\beq
\label{eq:citer}
\left(c + \mu(\bar{y}) \rho'(z) \right) \, \partial^2_{yz} u(z,\bar{y}) + \partial_y R(\bar{y},\rho)  = 0.
\eeq
Moreover, \eqref{eq:u2xy} implies that
$$
\partial^2_{yz} u(z,\bar{y}) = - \partial^2_{yy} u(z,\bar{y}) \, \bar{y}'
$$
and substituting into the latter equation the expression of $\bar{y}'$ given by~\eqref{eq:TWRziszero} we find
$$
\partial^2_{yz} u(z,\bar{y}) = \partial^2_{yy} u(z,\bar{y}) \ \dfrac{\partial_{\rho} R(\bar{y},\rho)}{\partial_y R(\bar{y},\rho)} \ \rho'(z).
$$
Inserting the above expression of $\partial^2_{yz} u(z,\bar{y})$ into~\eqref{eq:citer} gives 
$$
\mu(\bar{y}) \  \partial^2_{yy} u(z,\bar{y}) \ \partial_{\rho} R(\bar{y},\rho) \left(\rho'\right)^2 + c \, \partial^2_{yy} u(z,\bar{y}) \ \partial_{\rho} R(\bar{y},\rho) \ \rho' + \left(\partial_y R(\bar{y},\rho)\right)^2 = 0.
$$
In the case where $R(y,\rho)$ is defined via~\eqref{def:R}, we have that 
$$
\partial_{\rho} R(\cdot,\rho) = -1 \quad \text{and} \quad \displaystyle{\partial_y R(\bar{y},\cdot) = \dfrac{{\rm d} r(\bar{y})}{{\rm d} y}}.
$$
Hence, the latter equation becomes
\beq
\label{eq:precex}
\mu(\bar{y}) \  \partial^2_{yy} u(z,\bar{y}) \ \left(\rho'\right)^2 + c \, \partial^2_{yy} u(z,\bar{y}) \ \rho' - \left(\dfrac{{\rm d} r(\bar{y})}{{\rm d} y}\right)^2 = 0.
\eeq
Coherently with~\eqref{eq:TWbaryincrhodec}, the real roots of~\eqref{eq:precex} seen as a quadratic equation for $\rho'$ are negative. Furthermore, the following condition has to hold for the roots to be real 
$$
2 \ \left|\dfrac{{\rm d} r(\bar{y})}{{\rm d} y}\right| \sqrt{\dfrac{\mu(\bar{y})}{\left|\partial^2_{yy} u(z,\bar{y})\right|}} \leq c.
$$
This indicates that there is a minimal wave speed $c^*$, which satisfies the following condition
\beq
\label{eq:cminex}
c^* \geq \sup_{z \in \supp(r(\bar{y}))} 2 \ \left|\dfrac{{\rm d} r(\bar{y}(z))}{{\rm d} y}\right| \sqrt{\dfrac{\mu(\bar{y}(z))}{\left|\partial^2_{yy} u(z,\bar{y}(z))\right|}},
\eeq
where we have used the fact that, when $R(y,\rho)$ is defined via~\eqref{def:R}, relation~\eqref{eq:TWRiszero} gives
$$
%\label{eq:TWrhoex}
\rho(z) \equiv r(\bar{y}(z)), \quad z \in \supp{\left(\rho \right)}.
$$
Condition~\eqref{eq:cminex} implies that if
\beq
\label{eq:cminextoinf}
\left|\dfrac{{\rm d} r(Y)}{{\rm d} y}\right|  \sqrt{\mu(Y)} \longrightarrow \infty \; \text{ as } \; Y \to \infty
\eeq
then $c^* \to \infty$ as $Y \to \infty$.

\section{Discussion, biological implications and research perspectives}
\label{Sec5}

\paragraph{Discussion of the main results}
In this paper, we have reported on the results of numerical simulations of the non-local PDE~\eqref{eq:PDEnen} complemented with~\eqref{ass:mu} and~\eqref{def:R}, and subject to zero Neumann boundary conditions at $y=0$ and  $y=Y$. These numerical results indicate that 
\beq
\label{eq:convintro}
\text{if } n_{\e}(0,x,y)  \xrightharpoonup[\e  \rightarrow 0]{\scriptstyle\ast} \rho(0,x) \, \delta_{\bar{y}^0(x)}(y) \; \text{ then } \; n_{\e}(t,x,y)  \xrightharpoonup[\e  \rightarrow 0]{\scriptstyle\ast} \rho(t,x) \, \delta_{\bar{y}(t,x)}(y),
\eeq
with $\rho(t,x)$ and $\bar{y}(t,x)$ such that if $\rho(t,x)>0$ then the relation $R(\bar{y}(t,x), \rho(t,x)) =0$ holds. These numerical results also indicate that in the case where $Y \in \mathbb{R}^*_+$ (\emph{i.e.} when $\mu(Y)<\infty$ and, therefore, the cell mobility is bounded), $\rho(t,x)$ in~\eqref{eq:convintro} behaves like a one-sided compactly supported and monotonically decreasing travelling front $\rho(z) \equiv \rho(x-ct)$ that connects $\rho_M$ to $0$, while $\bar{y}(t,x)$ in~\eqref{eq:convintro} behaves like a compactly supported and monotonically increasing travelling front $\bar{y}(z) \equiv \bar{y}(x-ct)$ that connects $0$ to $Y$. Furthermore, we have provided numerical evidence for the fact that front edge acceleration and formation of stretching fronts may occur in the case where $Y \to \infty$ (\emph{i.e.} when $\mu(Y) \to \infty$ and, therefore, the cell mobility is unbounded).

In order to explain such numerical results, we have undertaken formal asymptotic analysis of the non-local PDE~\eqref{eq:PDEnen} complemented with~\eqref{ass:mu} and~\eqref{ass:R} in the asymptotic regime $\e \to 0$ using a Hamilton-Jacobi approach. In particular, we have shown that $\bar{y}(t,x)$ satisfies a generalised Burgers' equation with source term [see transport equation~\eqref{eq:PDEbary}] and $\rho(t,x)=R(\bar{y}(t,x),\rho(t,x))^{-1}(0)$ [see relation~\eqref{eq:Riszero}]. Moreover, we have shown that travelling-front solutions $\bar{y}(z)$ of such transport equation which connect $0$ to $Y$ are monotonically increasing, whilst the corresponding $\rho(z) = R(\bar{y}(z),\rho(z))^{-1}(0)$ is monotonically decreasing and connect $\rho_M$ to $0$ [see the monotonicity results given by~\eqref{eq:TWbaryincrhodec}]. Finally, in the case where $R(y,\rho)$ is defined via~\eqref{def:R}, we have characterised the minimal speed $c^*$ of such travelling-front solutions [see the result given by~\eqref{eq:cminex}] and derived sufficient conditions under which $c^* \to \infty$ as $Y \to \infty$ [see condition~\eqref{eq:cminextoinf}].

\paragraph{Biological implications of the main results} From a biological point of view, $\bar{y}(t,x)$ represents the dominant phenotypic trait at position $x$ and time $t$ and the transport equation for $\bar{y}(t,x)$ can be seen as a generalised canonical equation of adaptive dynamics~\cite{dieckmann1996dynamical,diekmann2005dynamics}, which describes the spatio-temporal evolution of the dominant phenotypic trait. Furthermore, the fact that $\rho(t,x)$ behaves like a monotonically decreasing travelling front $\rho(z)$ that connects $\rho_M$ to $0$ represents the formation of an invasion front of cells that expands into the surrounding environment~\cite{perez2011bright}. Hence, the fact that $\bar{y}(t,x)$ behaves like a monotonically increasing travelling front $\bar{y}(z)$ that connects $0$ to $Y$ has the following biological implications. First, the fact that the front $\bar{y}(z)$ is monotonic indicates that cells with different phenotypic characteristics populate different parts of the invasion front -- \emph{i.e.} phenotypic heterogeneity is dynamically maintained throughout the front. Secondly, since larger values of $y$ correlate with a lower proliferation rate and a higher mobility, the fact that the front $\bar{y}(z)$ is increasing indicates that more mobile/less proliferative phenotypic variants occupy the front edge, whereas less mobile/more proliferative phenotypic variants are selected at the back of the front. This recapitulates previous theoretical and experimental results on glioma growth, which indicate that the interior of the tumour consists mainly of proliferative cells while the tumour border comprises mainly cells that are more mobile and less proliferative -- see, for instance, ~\cite{alfonso2017biology,dhruv2013reciprocal,giese2003cost,giese1996dichotomy,wang2012ephb2,xie2014targeting} and references therein.

\paragraph{Research perspectives}
Building upon the results presented in this paper, a number of generalisations of the mathematical model given by the non-local PDE~\eqref{eq:PDEn} could be considered in order to investigate the role of the concerted action between evolutionary and mechanical processes in tissue development and tumour growth. For example, a natural generalisation is the one given by the following non-local PDE
\beq
 \label{eq:PDEnn}
 \begin{cases}
\displaystyle{\partial_t n - \mu(y) \, \nabla_{{\bf x}} \cdot \left(n  \, \nabla_{{\bf x}} P(t,{\bf x}) \right) = R(y,P(t,{\bf x})) \, n +  \beta \, \partial^2_{yy} n,}
\\\\
\displaystyle{P \equiv \Pi(\rho), \quad \rho(t,{\bf x}) := \int_{0}^Y n(t,{\bf x},y) \, {\rm d}y,}
\end{cases}
\;
({\bf x},y) \in \mathbb{R}^{d} \times (0,Y)
\eeq
subject to zero Neumann boundary conditions at $y=0$ and $y=Y$. Here, $d=1,2,3$ depending on the biological problem considered, and the function $P(t,{\bf x})$ is the pressure exerted by cells at position ${\bf x}$ and time $t$, which is defined via the barotropic relation $\Pi(\rho)$ that satisfies suitable assumptions. 

On the basis of the knowledge we have here acquired on the behaviour of the solutions to the non-local PDE~\eqref{eq:PDEn}, under asymptotic scenarios relevant to applications we may expect $n(t,{\bf x}, y)$ to converge to a singular measure of the form $\rho(t,{\bf x}) \delta_{\bar{y}(t,{\bf x})}(y)$. Moreover, depending on the choices of $Y$, $\mu(y)$, $R(y,P(t,{\bf x}))$ and $\Pi(\rho)$, the cell density $\rho(t,{\bf x})$ may develop into an invading front or it may exhibit interface instabilities~\cite{kim2020interface,lorenzi2016interfaces,pham2018nonlinear,tang2014composite}. Finally, when the following definition of $\Pi$ is considered
$$
\Pi(\rho) := K_{\gamma} \, \rho^{\gamma}, \qquad  K_{\gamma} >0, \; \gamma > 1,
$$
which was proposed in~\cite{perthame2014hele} in order to capture key aspects of tumour and tissue growth while ensuring analytical tractability of the model equation, one finds that $P(t,{\bf x})$ satisfies a porous medium-type equation. Hence, free-boundary problems may emerge in the asymptotic regime $\gamma \to \infty$ (\emph{i.e.} the asymptotic regime whereby cells are regarded as an incompressible fluid). These are lines of research that we will be pursuing in the near future.

\newpage
\appendix

\section{Numerical methods} \label{sec:scheme}
\label{AppA}
%The implicit finite volume scheme
Since $\rho_{\e}(t,x)$ might develop into a stiff travelling front, solving the non-local PDE~\eqref{eq:PDEnen} via an explicit finite volume scheme would result in a severe CFL constraint on $t$. In order to overcome such a limitation, we carried out numerical simulations using the implicit finite volume scheme presented here. For simplicity of notation, throughout this appendix we drop the subscript $\e$.

\paragraph{Time splitting} Adopting a time-splitting approach, which is based on the idea of decomposing the original problem into simpler subproblems that are then sequentially solved at each time-step, we decompose the non-local PDE~\eqref{eq:PDEnen} posed on $\Omega := (0,T]\times(0, X)\times(0,Y)$ into two parts -- {\it viz.} the diffusion-advection part corresponding to the following non-local PDE
\be\label{dyn:convection}
\begin{cases}
\partial_t n -\p_x(\mu(y) \, n \, \p_x\rho)=\vep \, \p_{yy}^2 n,
\\\\
\displaystyle{\rho(t,x) := \int_0^Y n(t,x,y) \, {\rm d}y}
\end{cases}
\ee
and the reaction part corresponding to the following integro-differential equation
\be\label{dyn:grow}
\begin{cases}
\vep \, \partial_t n = R(y,\rho) \, n,
\\\\
\displaystyle{\rho(t,x) := \int_0^Y n(t,x,y) \, {\rm d}y}.
\end{cases}
\ee
We complement~\eqref{dyn:convection} with zero-flux/Neumann boundary conditions at $x=0$ (we expect a constant step), $y=0$ and $y=Y$. Note that making the ansatz $n(t,x,y) = e^{\frac{u(t,x,y)}{\vep}}$, as similarly done in Section~\ref{Sec4}, the integro-differential equation~\eqref{dyn:grow} can be rewritten in the following alternative form
\be\label{dyn:grow_u}
\begin{cases}
\p_t u = R(y, \rho),
\\\\
\displaystyle{\rho(t,x) := \int_0^Y e^{\frac{u(t,x,y)}{\vep}} \, {\rm d}y}.
\end{cases}
\ee

\paragraph{Preliminaries and notation} We denote by $\llb k_1, k_2\rrb$ the set of integers between $k_1$ and $k_2$. We discretise $\Omega$ via a uniform structured grid of steps $\Dt$, $\Dx$, $\Dy$ whereby $t_h=h\Dt$ and the $(j, k)$-th cell is
\be\label{notation:x}
K_{j-\frac12,k-\frac12} = (x_{j-1},x_{j})\times (y_{k-1},y_{k})\quad  \text{with}\quad  x_j =  j\Dx, \quad y_k = k \Dy, 
\ee
where $j \in \llb1,  m_x\rrb$ and $k\in \llb1,  m_y\rrb$, 
$\Dx = \frac{X}{m_x}$, $\Dy = \frac{Y}{m_y}$ and $m_x,m_y\in \mathbb{N}$. Moreover, we let $N_{j-\frac12,k-\frac12}^h$ be the numerical approximation of the average of $n(t_h, x, y)$ over the cell $K_{j-\frac12,k-\frac12}$ and we consider the following first-order approximation of the average of $\rho(t_h, x)$ over the interval $(x_{j-1},x_{j})$
$$
\rho_{j-\frac12}^{h} = \Dy \sum_{k=1}^{m_y} N_{j-\frac12,k-\frac12}^{h}.
$$
Finally, we introduce the notation
\be
%\tilde{N}_{k(1+m_x)+j}^h
%$\tilde{N}_{k(1+m_x)+j}^h = N_{j-\frac12,k-\frac12}^h$,  
\bn^h = \left(N_{j-\frac12,k-\frac12}^h\right)^{\rm T}\in \mathbb{R}^{(m_x+1), (m_y+1)}, \quad \brho^{h} = \left(\rho_{j-\frac12}^h\right)^{\rm T} \in \mathbb{R}^{m_x}
\ee 
with $j \in\llb 1, m_x\rrb$ and $k \in\llb 1, m_y\rrb$.
% and $l \in\llb1, m_x\rrb$. 

\paragraph{Numerical scheme} {\bf Step 1} We first solve numerically~\eqref{dyn:convection} by using the following implicit scheme
\be\label{scheme:conv}
\frac{N_{j-\frac12,k-\frac12}^{*} - N_{j-\frac12,k-\frac12}^h}{\Dt} - \frac{1}{\Dx} \left[ 
F_{j,k-\frac12}^{*}
- 
F_{j-1,k-\frac12}^{*}
\right] 
=
\vep \frac{N_{j-\frac12,k+\frac12}^{*} - 2 N_{j-\frac12,k-\frac12}^{*} + N_{j-\frac12,k-\frac32}^{*}}{\Dy^2},
\ee
where $F_{j, k-\frac12}^{*}$ represents the numerical flux at the boundary $\p K_{j-\frac12,k-\frac12} \cap \{x=x_{j}\}$, which is given by the following upwind approximation 
\be
F_{j, k-\frac12}^{*} =  \mu_{k-\frac12}\left[ 
-(\delta_x\rho_{j}^{*})_{-} \, N_{j-\frac12,k-\frac12}^{*} 
+
(\delta_x\rho_{j}^{*})_{+} \, N_{j+\frac12,k-\frac12}^{*}
\right].
\ee 
Here, $\mu_{k-\frac12} = \mu(y_{k-\frac12})$,
$$
\delta_x\rho_j^{*}=\dfrac{\left(\rho_{j+\frac12}^{*} - \rho_{j-\frac12}^{*}\right)}{\Dx} \quad \text{with} \quad \rho_{j-\frac12}^{*} = \Dy \sum_{k=1}^{m_y} N_{j-\frac12,k-\frac12}^{*},
$$
and $(\cdot)_-$ and $(\cdot)_+$ are, respectively, the negative and positive part of $(\cdot)$. Analogous considerations hold for $F_{j-1,k-\frac12}^{*}$. We complement~\eqref{scheme:conv} with boundary conditions corresponding to zero-flux/Neumann boundary conditions at $x=0$ (we expect a constant step), $y=0$ and $y=Y$.
\\\\
{\bf Step 2} We solve numerically~\eqref{dyn:grow_u} using the following implicit scheme 
\be\label{scheme:rho_growth}
\frac{U_{j-\frac12,k-\frac12}^{h+1} - U_{j-\frac12,k-\frac12}^*}{\Dt} = R\left(y_{k-\frac12}, \rho_{j-\frac12}^{h+1}\right),
\ee
where $U_{j-\frac12,k-\frac12}^* = \e \ln \left(N_{j-\frac12,k-\frac12}^{*} \right)$ and $N_{j-\frac12,k-\frac12}^{*}$ is obtained via~\eqref{scheme:conv}. Since
\begin{eqnarray}\label{eq:rho_growth}
\rho_{j-\frac12}^{h+1} &=& \Dy \sum_{k=1}^{m_y} \exp\left( \dfrac{U_{j-\frac12,k-\frac12}^{h+1}}{\vep}\right) \nonumber\\ 
&=& \Dy \sum_{k=1}^{m_y} \exp\left( \dfrac{U_{j-\frac12,k-\frac12}^{*}+\Dt R\left(y_{k-\frac12}, \rho_{j-\frac12}^{h+1}\right)}{\vep}\right),
\end{eqnarray}
in the case where the function $R$ is defined via~\eqref{def:R} the value of $\rho_{j-\frac12}^{h+1}$ can be found by solving~\eqref{eq:rho_growth}.   The value of $\rho_{j-\frac12}^{h+1}$ so obtained is substituted into~\eqref{scheme:rho_growth}, which is then solved in order to find $U_{j-\frac12,k-\frac12}^{h+1}$, whose value is finally used to compute $N_{j-\frac12,k-\frac12}^{h+1}$ via the formula
$$
N_{j-\frac12,k-\frac12}^{h+1} =  \exp\left(\frac{U_{j-\frac12,k-\frac12}^{h+1}}{\vep} \right).
$$

\paragraph{Properties of the numerical scheme~\eqref{scheme:conv}}
Due to the the strong coupling between $n(t,x,y) $ and $\rho(t,x)$ in the non-local PDE~\eqref{dyn:convection}, it remains an open problem to prove existence and uniqueness of the solution to the corresponding initial-boundary value problem. Similarly, proving unique solvability of the nonlinear, nonlocal, implicit scheme~\eqref{scheme:conv} remains an open problem. 

Here, assuming solvability of~\eqref{scheme:conv}, we prove that such a numerical scheme preserves nonnegativity of $n$, maximum principle on $\rho$ and monotonicity of $\rho$ ({\it cf.} Proposition~\ref{prop:conv}). 
\begin{prpstn}\label{prop:conv}
%Assuming that $R(y,\rho)\equiv 0$ and 
Consider the scheme~\eqref{scheme:conv} only. If the numerical scheme~\eqref{scheme:conv} is uniquely solvable, then the following properties hold:
\vspace{-0.3cm}
\begin{itemize}
\item[(i)] [nonnegativity] \\ if $\bn^h\ge0$ then $\bn^*\ge0$; \\
\vspace{-0.3cm}
\item[(ii)] [maximum principle on $\rho$] \\
if $0\le \brho^h \le \rho_M$ then $0\le \brho^* \le \rho_M$; \\
\vspace{-0.3cm}
\item[(iii)] [monotonicity of $\rho$] \\
%$\rho_{j+\frac12}^h \le \rho_{j-\frac12}^h$ for all $j\in\llb1, m_x-1\rrb$ and all $h>0$
if $\brho^h$ is monotonically decreasing then $\brho^*$ is monotonically decreasing.
\end{itemize}
\end{prpstn}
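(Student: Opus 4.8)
The plan is to establish the three properties in the order (i), (ii), (iii), each building on the previous one, and to exploit throughout that the upwind flux in~\eqref{scheme:conv} is a linear combination of the $N^*$ with \emph{sign-definite} weights: setting $a_j := (\delta_x\rho_j^*)_-\ge0$ and $b_j := (\delta_x\rho_j^*)_+\ge0$ (so $\delta_x\rho_j^* = b_j - a_j$), one has $F_{j,k-\frac12}^* = \mu_{k-\frac12}\bigl(-a_j N_{j-\frac12,k-\frac12}^* + b_j N_{j+\frac12,k-\frac12}^*\bigr)$. For (i), I would collect~\eqref{scheme:conv} into $A\,\bn^* = \bn^h$, where at the (assumed) solution $A$ has, at the interior cell $(j,k)$, diagonal entry $1 + \frac{\mu_{k-\frac12}\Dt}{\Dx}(a_j+b_{j-1}) + \frac{2\vep\Dt}{\Dy^2}>0$ and off-diagonal entries $-\frac{\mu_{k-\frac12}\Dt}{\Dx}b_j$, $-\frac{\mu_{k-\frac12}\Dt}{\Dx}a_{j-1}$ and $-\frac{\vep\Dt}{\Dy^2}$ (twice), all $\le0$, with the obvious deletions at boundary cells (the diagonal being reduced accordingly). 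The key observation is that the sum of the moduli of the off-diagonal entries in the column labelled $(j,k)$ equals $\frac{\mu_{k-\frac12}\Dt}{\Dx}(b_{j-1}+a_j) + \frac{2\vep\Dt}{\Dy^2}$, which is exactly the corresponding diagonal entry \emph{minus} $1$; hence $A$ has nonpositive off-diagonal, positive diagonal and is strictly diagonally dominant by columns, so $A$ is a nonsingular M-matrix with $A^{-1}\ge0$ entrywise, and $\bn^* = A^{-1}\bn^h\ge0$ since $\bn^h\ge0$.

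For (ii), I would sum~\eqref{scheme:conv} over $k\in\llb1,m_y\rrb$ and multiply by $\Dy$: the $y$-diffusion telescopes to $0$ thanks to the Neumann conditions, leaving $\rho_{j-\frac12}^* - \rho_{j-\frac12}^h = \frac{\Dt}{\Dx}\bigl(G_j^* - G_{j-1}^*\bigr)$, where $G_j^* := \Dy\sum_k F_{j,k-\frac12}^* = \delta_x\rho_j^*\,\tilde Q_j$ and $\tilde Q_j$ is the $\mu$-weighted cell average of $N^*$ over the upwind cell — that is, $\tilde Q_j = \Dy\sum_k\mu_{k-\frac12}N_{j+\frac12,k-\frac12}^*$ if $\delta_x\rho_j^*\ge0$ and $\tilde Q_j = \Dy\sum_k\mu_{k-\frac12}N_{j-\frac12,k-\frac12}^*$ otherwise. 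By part~(i), $\bn^*\ge0$, hence every $\tilde Q_j\ge0$, and the resulting identity reads $\rho_{j-\frac12}^*\bigl(1 + \tfrac{\Dt}{\Dx^2}(\tilde Q_j+\tilde Q_{j-1})\bigr) = \rho_{j-\frac12}^h + \tfrac{\Dt}{\Dx^2}\tilde Q_j\,\rho_{j+\frac12}^* + \tfrac{\Dt}{\Dx^2}\tilde Q_{j-1}\,\rho_{j-\frac32}^*$ (with one term dropped at $j=1$ and $j=m_x$, where the boundary flux vanishes); that is, each $\rho_{j-\frac12}^*$ is a convex combination of $\rho_{j-\frac12}^h$, $\rho_{j+\frac12}^*$ and $\rho_{j-\frac32}^*$. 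Evaluating this at the index where $\brho^*$ attains its maximum (resp.\ minimum) and using $0\le\brho^h\le\rho_M$ gives $0\le\brho^*\le\rho_M$ by the standard one-line argument.

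For (iii), I would pass to the discrete flux variable $e_j^* := \tilde Q_j\,d_j^*$, with $d_j^* := \rho_{j+\frac12}^*-\rho_{j-\frac12}^*$ (so $e_j^* = \Dx\,G_j^*$), for $j\in\llb1,m_x-1\rrb$ and $e_0^* = e_{m_x}^* = 0$. Subtracting the $\rho$-identity at cell $j$ from the one at cell $j+1$ gives $d_j^* - d_j^h = \frac{\Dt}{\Dx^2}\bigl(e_{j+1}^* - 2e_j^* + e_{j-1}^*\bigr)$, and multiplying through by $\tilde Q_j$ produces the discrete parabolic equation $e_j^*\bigl(1 + \tfrac{2\Dt\tilde Q_j}{\Dx^2}\bigr) = \tilde Q_j\,d_j^h + \tfrac{\Dt\tilde Q_j}{\Dx^2}\bigl(e_{j+1}^*+e_{j-1}^*\bigr)$, in which the coefficient $\tilde Q_j\ge0$ now factors out of the stencil at each node. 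If $\brho^h$ is decreasing then $d_j^h\le0$, so the source $\tilde Q_j d_j^h\le0$; evaluating the equation at the index where $\max_j e_j^*$ is attained and bounding the two neighbours by this maximum forces it to be $\le0$, whence $e_j^*\le0$ for all $j$. Finally, if some $d_{j}^*>0$ then $\rho_{j+\frac12}^*>\rho_{j-\frac12}^*\ge0$ by part~(ii), so the (right) upwind cell has positive mass and, since $\mu>0$ by~\eqref{ass:mu}, $\tilde Q_j>0$, giving $e_j^* = \tilde Q_j d_j^*>0$, a contradiction; hence $d_j^*\le0$ for all $j$, \ie $\brho^*$ is monotonically decreasing.

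I expect the main obstacle to be the fact that summing~\eqref{scheme:conv} over $y$ does \emph{not} close into a self-contained equation for $\brho^*$ — it produces the $\mu$-weighted moments $\tilde Q_j$ rather than $\rho^*$ — so one cannot simply invoke a scalar discrete maximum principle. The remedy is to freeze the nonnegative weights $\tilde Q_j$ at the assumed solution and, for the monotonicity statement, to work with the flux variable $e_j^*$, for which $\tilde Q_j$ enters the stencil symmetrically; this is the crux of (iii). For (i), the analogous difficulty is that the upwind advection matrix is \emph{not} row-diagonally dominant in general, which is why the argument must rely on column dominance and the M-matrix property rather than on a naive row-wise minimum principle.
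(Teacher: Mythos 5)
Your proof is correct. Parts (i) and (ii) follow essentially the same route as the paper: the same column-wise strict diagonal dominance and M-matrix/inverse-positivity argument for the full scheme in (i), and in (ii) the same summed-over-$y$ equation for $\brho^{*}$ recast as a convex combination (the paper's coefficients $d_j^{*}$ are exactly your $\tfrac{\Dt}{\Dx^2}\tilde Q_j$), closed by evaluating at the extremal index. The only real divergence is in (iii). The paper keeps the difference variable $w_j^{*}=\rho_{j+\frac12}^{*}-\rho_{j-\frac12}^{*}$ itself, derives the tridiagonal system $(1+2d_j^{*})w_j^{*}-d_{j-1}^{*}w_{j-1}^{*}-d_{j+1}^{*}w_{j+1}^{*}=w_j^{h}$, and reuses the column-dominance/M-matrix argument of part (i) to conclude $\bw^{*}\le 0$ directly. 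You instead multiply that relation through by $\tilde Q_j$ so as to work with the flux variable $e_j^{*}=\tilde Q_j w_j^{*}$, for which the stencil becomes row-diagonally dominant and a pointwise maximum principle applies; you then need, and correctly supply, the extra de-scaling step showing that $e_j^{*}\le 0$ implies $w_j^{*}\le 0$, via the observation that $w_j^{*}>0$ would force the upwind cell to carry positive mass and hence $\tilde Q_j>0$. Both arguments are sound: the paper's is marginally shorter because inverse-positivity gives the sign of $w_j^{*}$ without the final step, while yours replaces the matrix argument by an elementary row-wise maximum principle (which is unavailable for the untransformed system, since that one is only column-, not row-, diagonally dominant -- precisely the obstacle you identify).
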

\begin{proof}
{\it (i)} The implicit scheme \eqref{scheme:conv} can be rewritten as 
\begin{align}
\label{eqrev1}
-a_{j-1,k}^{*} N_{j-\frac32,k-\frac12}^{*}+
&b_{j,k}^{*} N_{j-\frac12,k-\frac12}^{*} 
-c_{j+1,k}^{*} N_{j+\frac12,k-\frac12}^{*} +\\
&\vep\frac{\Dt}{(\Dy)^2} \big(-N_{j-\frac12,k-\frac32}^{*} + 2N_{j-\frac12,k-\frac12}^{*} - N_{j-\frac12,k+\frac12}^{*} \big)
=
N_{j-\frac12,k-\frac12}^h,
\end{align}
where  
\begin{align*}
&a_{j,k}^{*} = \frac{\Dt}{\Dx}\mu_{k-\frac12}(\delta_x\rho_{j}^{*})_{-} \ge 0,
\quad
c_{j,k}^{*} = \frac{\Dt}{\Dx}\mu_{k-\frac12}(\delta_x\rho_{j-1}^{*})_{+} \ge 0, 
\\
&b_{j,k}^{*} = 1+\frac{\Dt}{\Dx}\mu_{k-\frac12}\left[(\delta_x\rho_{j-1}^{*})_{+}+(\delta_x\rho_{j}^{*})_{-}\right] = 1 + a_{j,k}^{*} + c_{j,k}^{*}.
\end{align*}
The system of equations~\eqref{eqrev1} can be written in matrix form as
\be
\bM^{*} \bn^{*} = \bn^h, \nonumber
\ee
where $\bM^{*}$ is a matrix containing the terms $a_{j,k}^{*}$'s, $b_{j,k}^{*}$'s and $c_{j,k}^{*}$'s with $j \in \llb1,  m_x\rrb$, $k\in \llb1,  m_y\rrb$. 
Since the matrix $\bM^{*}$ is strictly diagonally dominant by columns, it is invertible and all elements of $\left(\bM^{*}\right)^{-1}$ are positive. This ensures that $\bn^{*}$ is nonnegative if $\bn^h$ is nonnegative. \\

{\it (ii)}  Summing \eqref{scheme:conv} over all $k\in \llb 1, m_y \rrb$, we find 
\be\label{scheme:conv_rho}
\frac{\rho_{j-\frac12}^{*} - \rho_{j-\frac12}^h}{\Dt} - \frac{1}{\Dx} \left[ 
<\mu_{k-\frac12} N_{j,k-\frac12}^ {*,\rm{upwind}}>\delta_x \rho_{j}^ {*}
- 
<\mu_{k-\frac12} N_{j-1,k-\frac12}^ {*,\rm{upwind}}>\delta_x \rho_{j-1}^ {*}
\right] 
=
0, 
\ee
where $\delta_x \rho_{j}^ {*} = \left(\rho_{j+\frac12}^ {*}-\rho_{j-\frac12}^ {*}\right) / \Dx$, $<\mu_{k-\frac12} N_{j,k-\frac12}^{*,\rm{upwind}} >= \Dy \sum_{k=1}^{m_y}\mu_{k-\frac12} N_{j,k-\frac12}^{*,\rm{upwind}}$ and 
\be
N_{j,k-\frac12}^{*,\rm{upwind}} = 
\begin{cases}
N_{j-\frac12,k-\frac12}^{*} \quad&\text{ if } \delta_x\rho_j^{*} < 0, \\
N_{j+\frac12,k-\frac12}^{*} \quad&\text{ if } \delta_x\rho_{j}^{*} \ge 0. 
\end{cases}
\ee
For simplicity of notation, we define 
$
\displaystyle{d_j^{*} = \frac{\Dt}{\Dx^2}<\mu_{k-\frac12} N_{j,k-\frac12}^ {*,\rm{upwind}}>}.
$ 
Notice that $d_j^{*}\ge0$. Then, the system of equations~\eqref{scheme:conv_rho} can be rewritten as 
\be\label{scheme:conv_rho_d}
(1 + d_{j-1}^{*} + d_{j}^ {*}) \rho_{j-\frac12}^ {*} -  d_{j-1}^{*}  \rho_{j-\frac32}^ {*}
-d_{j}^{*}  \rho_{j+\frac12}^ {*}
=\rho_{j-\frac12}^ h.
\ee
Assume that $\displaystyle{\rho_{j_0-\frac12}^ {*} = \min_j \{\rho_{j-\frac12}^{*}\}}$, we claim that $\rho_{j_0-\frac12}^{*} \ge0$. 
In fact, if $\rho_{j_0-\frac12}^{*} < 0$, we have
\be
\rho_{j_0-\frac12}^h = \rho_{j_0-\frac12}^{*} + d_{j_0-1}^{*}(\rho_{j_0-\frac12}^{*} - \rho_{j_0-\frac32}^{*}) + d_{j_0}^{*}(\rho_{j_0-\frac12}^{*} - \rho_{j_0+\frac12}^{*}) \le  \rho_{j_0-\frac12}^{*}<0, 
\ee
which is a contradiction. Hence, $\brho^{*}\ge0$. Similarly, one can prove that $\brho^{*} \le \rho_M$. \\

{\it (iii)} Introducing the notation $w_j^{*} = \rho_{j+\frac12}^{*} - \rho_{j-\frac12}^{*}$, we rewrite~\eqref{scheme:conv_rho_d} as 
\be\label{scheme:conv_rho_w}
\rho_{j-\frac12}^{*} + d_{j-1}^{*} w_{j-1}^{*} - d_{j}^{*} w_{j}^{*} = \rho_{j-\frac12}^{h}. 
\ee
Changing all subscripts in \eqref{scheme:conv_rho_w} from $j$ to $j+1$, after a little algebra we find
\be\label{scheme:conv_dw}
(1+2d_j^{*})w_j^{*} - d_{j-1}^{*} w_{j-1}^{*} - d_{j+1}^{*} w_{j+1}^{*} = w_j^h.
\ee
Writing the  system of the equations \eqref{scheme:conv_dw} in matrix form and using arguments similar to those used in part {\it (i)}, it is possible to prove that $\bw^{*}\le0$ if $\bw^{h}\le0$.
\end{proof}

\paragraph{Properties of the numerical scheme~\eqref{scheme:rho_growth}} The numerical scheme~\eqref{scheme:rho_growth} satisfies the properties established by Proposition~\ref{prop:grow}.
\begin{prpstn}\label{prop:grow}
Consider the scheme~\eqref{scheme:rho_growth} only. If $R(y,\rho)$ satisfies assumptions~\eqref{ass:R} and $\bn^*\ge0$, then the following properties hold:
\vspace{-0.3cm}
\begin{itemize}
\item[(i)] [existence and uniqueness and nonnegativity] \\ the scheme~\eqref{scheme:rho_growth} admits a unique solution such that $\bn^{h+1}\ge0$; \\
\vspace{-0.3cm}
\item[(ii)]  [maximum principle on $\rho$]\\ if $0\le \brho^* \le \rho_M$, then $0 \le \brho^{h+1} \le \rho_M$.
\end{itemize}
\end{prpstn}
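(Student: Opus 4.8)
The plan is to exploit the fact that, once the variable $U^{h+1}$ is eliminated, the scheme~\eqref{scheme:rho_growth} decouples across the spatial index $j$ into independent scalar equations for the components $\rho_{j-\frac12}^{h+1}$ of $\brho^{h+1}$. First I would substitute $U_{j-\frac12,k-\frac12}^{h+1} = U_{j-\frac12,k-\frac12}^{*} + \Dt\,R(y_{k-\frac12},\rho_{j-\frac12}^{h+1})$ into the definition $\rho_{j-\frac12}^{h+1} = \Dy\sum_{k} \exp(U_{j-\frac12,k-\frac12}^{h+1}/\e)$, which reduces the whole step, for each fixed $j$, to finding $\sigma \ge 0$ solving the scalar fixed-point equation $\sigma = G_j(\sigma)$, where
\[
G_j(\sigma) := \Dy\sum_{k=1}^{m_y} N_{j-\frac12,k-\frac12}^{*}\,\exp\!\left(\frac{\Dt\,R(y_{k-\frac12},\sigma)}{\e}\right).
\]

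For part~(i), I would note that since $\bn^{*}\ge0$ and $R$ is smooth and bounded, $G_j$ is continuous, nonnegative and, because $\partial_\rho R<0$ [{\it cf.}~\eqref{ass:R}], nonincreasing on $\mathbb{R}$; in particular $G_j$ is bounded. Hence $F_j(\sigma):=\sigma - G_j(\sigma)$ is continuous and strictly increasing, with $F_j(0) = -G_j(0)\le 0$ and $F_j(\sigma)\to+\infty$ as $\sigma\to+\infty$. Therefore $F_j$ has exactly one zero, which is nonnegative and which we take as $\rho_{j-\frac12}^{h+1}$; doing this for every $j$ shows $\brho^{h+1}$ exists and is unique. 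The corresponding $N_{j-\frac12,k-\frac12}^{h+1} = N_{j-\frac12,k-\frac12}^{*}\exp(\Dt\,R(y_{k-\frac12},\rho_{j-\frac12}^{h+1})/\e)$ is then manifestly nonnegative, i.e.\ $\bn^{h+1}\ge0$.

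For part~(ii), the lower bound $\brho^{h+1}\ge0$ is already contained in part~(i). For the upper bound I would use assumptions~\eqref{ass:R}: from $R(0,\rho_M)=0$ together with $\partial_y R(y,\cdot)<0$ for $y\in(0,Y]$ one gets $R(y,\rho_M)\le0$ for all $y\in[0,Y]$, whence $\exp(\Dt\,R(y_{k-\frac12},\rho_M)/\e)\le1$ and $G_j(\rho_M)\le \Dy\sum_{k} N_{j-\frac12,k-\frac12}^{*} = \rho_{j-\frac12}^{*}\le\rho_M$. Thus $F_j(\rho_M)=\rho_M-G_j(\rho_M)\ge0$, and since $F_j$ is strictly increasing with $F_j(\rho_{j-\frac12}^{h+1})=0$, it follows that $\rho_{j-\frac12}^{h+1}\le\rho_M$ for every $j$.

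I do not expect a serious obstacle here: the argument is essentially a one-dimensional monotone fixed-point computation. The only points requiring a little care are the degenerate case in which all $N_{j-\frac12,k-\frac12}^{*}$ vanish for some $j$ (then $G_j\equiv0$ and the unique solution is trivially $\rho_{j-\frac12}^{h+1}=0$, consistent with every claimed bound), and the observation that strict monotonicity of $F_j$, hence uniqueness, survives that case because it is inherited from the linear term $\sigma$ rather than from $-G_j$.
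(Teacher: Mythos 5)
Your argument is correct and is essentially the paper's own proof: your $F_j(\sigma)=\sigma-G_j(\sigma)$ is exactly the function $f(\rho)$ the authors introduce, and both proofs rest on the same three facts ($F_j$ strictly increasing since $\partial_\rho R<0$, $F_j(0)\le 0$, and $F_j(\rho_M)\ge \rho_M-\rho^*_{j-\frac12}\ge 0$ because $R(\cdot,\rho_M)\le 0$). Your explicit treatment of the degenerate case where all $N^*_{j-\frac12,k-\frac12}$ vanish is a minor refinement the paper skips by implicitly taking $f(0)<0$, but it does not change the substance.
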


\begin{proof}
{\it (i)} It is sufficient to prove existence and uniqueness of $\rho_{j-\frac12}^{h+1}$. Let
$$
f(\rho) = \rho -  \Dy \sum_{k=1}^{m_y} \exp\left(\frac{U_{j-\frac12,k-\frac12}^{*}+\Dt R(y_{k-\frac12}, \rho)}{\vep}\right).
$$ 
Since $f'(\rho) >0$, $f(0)<0$ and $\displaystyle{\lim_{\rho\to\infty} f(\rho) =\infty}$, equation \eqref{eq:rho_growth} has a unique positive root, which is $\rho_{j-\frac12}^{h+1}$. From this,
existence, uniqueness and nonnegativity of $N_{j-\frac12,k-\frac12}^{h+1}$ immediately follow. \\
%since it can be computed explicitly once that $\rho_{j-\frac12}^{h+1}$ is known.  Proving positivity of $N_{j-\frac12,k-\frac12}^{h+1}$ is trivial.\\

{\it (ii)} Noticing that  $f'(\rho)>0$, $f(0) <0$ and 
\be
f(\rho_M) \ge \rho_M -  \Dy \sum_{k=1}^{m_y} \exp\left(\frac{U_{j-\frac12,k-\frac12}^{*}}{\vep}\right) = \rho_M -\rho_{j-\frac12}^* \ge0,
\ee
we conclude that equation \eqref{eq:rho_growth} has a unique solution in the interval $[0, \rho_M]$. This implies that $0\le \brho^{h+1} \le \rho_M$. 
\end{proof}

\section*{Acknowledgements}
B.P. has received funding from the European Research Council (ERC) under the European Union's Horizon 2020 research and innovation programme (grant agreement No 740623). T.L. gratefully acknowledges support of the project PICS-CNRS no. 07688 and the MIUR grant ``Dipartimenti di Eccellenza 2018-2022'', and would like to thank Alexander Lorz for insightful discussions during the early stages of the project.

\bibliographystyle{siam}
\bibliography{BibliographyBTX}

\end{document}